\documentclass{amsart}

\usepackage{amsmath,amssymb,amsthm}
\usepackage{hyperref}
\usepackage{xcolor}

\usepackage[small,nohug,heads=vee,dpi=600]{diagrams}
\diagramstyle[labelstyle=\scriptstyle]
\newarrow {Dashto} {}{dash}{}{dash}>
\newarrow {Dash} {}{dash}{}{dash}{}
\newarrow {Doubleto} {}{=}{}{=}{=>}

\hyphenation{mani-fold mani-folds sub-mani-fold
 sub-mani-folds topo-logy self-in-ter-sec-tion pluri-sub-har-monic
 pa-ram-e-tri-za-tion}

\newtheorem{thmintr}{Theorem}
\newtheorem{prop}{Proposition}[section]
\newtheorem{lem}[prop]{Lemma}

\theoremstyle{definition}

\newtheorem{rem}[prop]{Remark}

\newtheorem*{ack}{Acknowledgement}


\def\co{\colon\thinspace}

\newcommand{\C}{\mathbb C}

\newcommand{\rmd}{\mathrm d}
\newcommand{\D}{\mathbb D}

\newcommand{\rme}{\mathrm e}

\newcommand{\bfh}{\mathbf h}

\newcommand{\rmi}{\mathrm i}

\newcommand{\bfp}{\mathbf p}

\newcommand{\bfq}{\mathbf q}

\newcommand{\R}{\mathbb R}

\newcommand{\bfs}{\mathbf s}

\newcommand{\bft}{\mathbf t}

\newcommand{\WW}{\mathcal W}
\newcommand{\bfw}{\mathbf w}

\newcommand{\bfx}{\mathbf x}

\newcommand{\bfy}{\mathbf y}

\newcommand{\Z}{\mathbb Z}

\newcommand{\bfz}{\mathbf z}

\newcommand{\lra}{\longrightarrow}
\newcommand{\ra}{\rightarrow}

\DeclareMathOperator{\ev}{\mathrm{ev}}

\DeclareMathOperator{\Int}{\mathrm{Int}}


\begin{document}

\author{Kilian Barth}
\address{Fraunhofer-Institut f\"ur Hochfrequenzphysik und Radartechnik FHR
Fraunhoferstr. 20, D-53343 Wachtberg, Germany}
\email{kilian.barth@fhr.fraunhofer.de}
\author{Jay Schneider}
\address{Mathematisches Institut, Westf\"alische Wilhelms-Universit\"at M\"unster,
Einsteinstr. 62, D-48149 M\"unster, Germany}
\email{jay.schneider@uni-muenster.de}
\author{Kai Zehmisch}
\address{Mathematisches Institut, Justus-Liebig-Universit\"at Gie{\ss}en,
Arndtstra{\ss}e 2, D-35392 Gie{\ss}en, Germany}
\email{Kai.Zehmisch@math.uni-giessen.de}

\title[Symplectic dynamics of contact isotropic torus complements]{Symplectic dynamics of contact isotropic torus complements}

\date{13.09.2018}

\begin{abstract}
  We determine the homotopy type
  of isotropic torus complements
  in closed contact manifolds
  in terms of Reeb dynamics
  of special contact forms.
  For that we utilise holomorphic curve techniques
  known from symplectic field theory
  as Gromov--Hofer compactness
  and localised transversality
  on non-compact contact manifolds.
\end{abstract}

\subjclass[2010]{53D35; 37C27, 37J55, 57R17.}
\thanks{This research is part of projects in the
SFB 878 {\it Groups, Geometry and Actions}
and the SFB/TRR 191
{\it Symplectic Structures in Geometry, Algebra and Dynamics},
both funded by the DFG}

\maketitle


\section{Introduction\label{sec:intro}}

By the isotropic neighbourhood theorem
a neighbourhood of a closed isotropic submanifold $Q$
in a given contact manifold is determined
by the diffeomorphism type of $Q$
and by the isomorphism class of the conformally
symplectic normal bundle $\mathrm{CSN}(Q)$ of $Q$,
cf.\ \cite[Theorem 2.5.8]{gei08}.
For instance if $\mathrm{CSN}(Q)$ is trivial,
a trivialisation of $\mathrm{CSN}(Q)$ determines
a local model given by
a neighbourhood of $Q$ in the contactisation
of $T^*Q\times\C^{n-d}$,
$d=\dim Q\leq n$,
cf.\ \cite[Section 3.1]{vkoe18}.
On the other hand,
the restriction of any defining contact form
to the tangent bundle of a compact hypersurface
determines the germ of the contact structure,
see Ding--Geiges \cite[Proposition 6.4]{dg12}.
In particular,
this applies to the boundary
of a disc-like neighbourhood of $Q$.
Combined with local contact inversion
(see Proposition \ref{prop:contact-inversion})
it turns out that there is no canonical distinction
between in- or outside for this hypersurface
if $\mathrm{CSN}(Q)$ is trivial.

In this work we consider the case where $Q$ is a torus $T^d$
and where the complement $M$ of a tubular neighbourhood of $Q$
is compact.
Assuming $n>d$
we investigate to which extent a choice of a contact form
on $M$ that is of model type near the boundary
determines the topology of $M$.
As demonstrated by Eliashberg--Hofer \cite{eh94}
if $\dim M=3$ and
Geiges--Zehmisch \cite{gz16b} for $\dim M\geq5$,
$M$ is diffeomorphic to an $(2n+1)$-dimensional ball
whenever $M$ does not have any
short contractible periodic Reeb orbits
and $\partial M$ is a sphere.
This situation corresponds to $Q=*$.
In fact,
Eliashberg--Hofer \cite{eh94} proved
a global Darboux theorem in the absence of 
short periodic Reeb orbits if $\dim M=3$.
In contrast,
Geiges--R\"ottgen--Zehmisch \cite{grz14}
constructed an aperiodic Reeb flow with trapped orbits
on $\R^{2n+1}$, $n\geq2$,
that is standard outside a compact set.
In order to find a relation between the topology
of the isotropic knot complement $M$
and the existence of short periodic Reeb orbits on $M$
we will utilise holomorphic curves as it is typical
in symplectic dynamics as propagated by
Bramham--Hofer \cite{bh12}.


\subsection{Main result\label{subsec:mainres}}

Let us assume that $Q$ is the $d$-dimensional torus
\[
T^d:=\R^d/2\pi\Z^d\,.
\]
We consider a compact, connected $(2n+1)$-dimensional
strict contact manifold $(M,\alpha)$ with boundary
\[
\partial M=S\big(T^*Q\oplus\underline{\R}^{2n+1-2d}\big)
\]
equal to the unit sphere bundle
of the stabilised cotangent bundle
$T^*Q\oplus\underline{\R}^{2n+1-2d}$ of $T^*Q$.
In particular, the boundary of $M$ is diffeomorphic to
\[
\partial M=T^d\times S^{2n-d}\,.
\]
The aim of this work is to give a criterion for $M$
to be diffeomorphic to the unit disc bundle
\[
D\big(T^*Q\oplus\underline{\R}^{2n+1-2d}\big)
=T^d\times D^{2n+1-d}
\]
in terms of the infimum $\inf_0(\alpha)$ of all positive periods of
{\it contractible} closed Reeb orbits of the Reeb vector field of $\alpha$
and the following {\it embeddability condition}:

Write
\[
Z:=\R\times T^*T^d\times D^2\times\C^{n-1-d}
\]
for the model neighbourhood of an isotropic torus
$T^d$ with trivial conformally symplectic normal bundle
inside a contact manifold
equipped with the contact form
\[
\alpha_Z:=
\rmd b
+\sum_{j=1}^dp_j\rmd q_j
+\frac12\big(x_0\rmd y_0-y_0\rmd x_0\big)
-\sum_{j=1}^{n-1-d}y_j\rmd x_j\,,
\]
where $b\in\R$,
$p_j,q_j$ are coordinates on the cotangent bundle $T^*T^d$,
$x_0,y_0$ are coordinates on the closed unit disc $D^2$,
and $x_j+\rmi y_j$ are coordinates on $\C^{n-1-d}$.
We will use the following short form
\[
\alpha_Z=
\rmd b
+\bfp\!\;\rmd\bfq
+\frac12\big(x_0\rmd y_0-y_0\rmd x_0\big)
-\bfy\rmd\bfx\,,
\]
of the contact form during the text.

We say that $\partial M$ admits a {\bf contact embedding}
into $(Z,\alpha_Z)$ if there exists
a strict contact embedding $\varphi$
of a collar neighbourhood $U$ of $\partial M\subset M$
into the interior of $Z$
in the sense that  $\varphi^*\alpha_Z=\alpha$
such that
\begin{itemize}
\item
each flow line of the Reeb vector field $\partial_b$
intersects $\varphi(\partial M)\subset Z$ in at most two points,
\item
the image $\varphi(U)$ is
contained in the bounded component of
$Z\setminus\varphi(\partial M)$, and
\item
$\varphi(\partial M)$
is smoothly isotopic to
$S\big(T^*T^d\oplus\underline{\R}^{2n+1-2d}\big)$
inside $Z$.
\end{itemize}

\begin{thmintr}
\label{thm:mainthm}
 Let $(M,\alpha)$ be a strict contact manifold
 as described above such that $\partial M$
 has a contact embedding into $(Z,\alpha_Z)$
 with $n>d$.
 If $\inf_0(\alpha)\geq\pi$,
 then $M$ and $T^d\times D^{2n+1-d}$
 are homotopy equivalent if $n=2$
 and diffeomorphic otherwise.
\end{thmintr}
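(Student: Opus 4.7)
The strategy is to adapt the Bishop-type pseudoholomorphic disk filling of Geiges--Zehmisch~\cite{gz16b}, which settles the case $Q=*$, to the isotropic torus case by producing an explicit foliation of $M$ by pseudoholomorphic $2$-disks with boundary on $\varphi(\partial M)$. The key observation is that the model $(Z,\alpha_Z)$, equipped with an almost complex structure pairing $\partial_{x_0}$ with $\partial_{y_0}$ and chosen standardly on the remaining directions, carries the smooth $(2n-1)$-parameter family of pseudoholomorphic $2$-disks
\[
D_{b_0,\bfp_0,\bfq_0,\bfx_0,\bfy_0}
=\{(b_0,\bfp_0,\bfq_0)\}\times D^2\bigl(\sqrt{1-r_0^2}\bigr)\times\{(\bfx_0,\bfy_0)\},
\qquad
r_0^2=b_0^2+|\bfp_0|^2+|\bfx_0|^2+|\bfy_0|^2<1,
\]
which sweep out the bounded component of $Z\setminus S\bigl(T^*T^d\oplus\underline{\R}^{2n+1-2d}\bigr)$ as the parameters range over $T^d\times B^{2n-1-d}$, each carrying symplectic area $\pi(1-r_0^2)\leq\pi$ and having boundary on the standard sphere bundle; together they recover the disk bundle $T^d\times D^{2n+1-d}$.

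I would first use $\varphi$ to replace the bounded component of $Z\setminus\varphi(\partial M)$ by $M$, thereby producing a non-compact contact manifold $(W,\alpha_W)$ that coincides with $(Z,\alpha_Z)$ on the unbounded end and carries $M$ as its compact core. The two-point intersection condition with the Reeb lines of $\partial_b$ and the isotopy condition on $\varphi(\partial M)$ ensure that this gluing is well-defined and that the standard family above restricts, on the $Z$-side of $\varphi(\partial M)\subset W$, to a genuine boundary family of pseudoholomorphic disks. Next I would choose a compatible almost complex structure $J$ on $W$ extending the standard one outside a neighbourhood of $M$, and form the moduli space $\MM$ of $J$-pseudoholomorphic $2$-disks in $W$ with boundary on $\varphi(\partial M)$ in the standard homotopy class and with an appropriate boundary marking. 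Localised transversality over the non-compact $W$, performed by perturbing $J$ only inside $M$ in the spirit of \cite{bh12} and \cite{gz16b}, should render $\MM$ a smooth $(2n-1)$-manifold extending the standard family from the $Z$-end.

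The decisive step is Gromov--Hofer compactness of $\MM$: a sequence of $J$-holomorphic disks of area at most $\pi$ can fail to converge only through sphere bubbling or breaking along a finite-energy plane asymptotic to a closed Reeb orbit on $\partial M$. Any such degeneration would leave a nontrivial principal component of area strictly less than $\pi$, hence in the breaking case a closed Reeb orbit on $\partial M$ of period strictly less than $\pi$ that is contractible in $M$ (via the truncated disk), contradicting the hypothesis $\inf_0(\alpha)\geq\pi$; sphere bubbles are excluded in parallel, using exactness of $d\alpha_Z$ on the $Z$-end and the same lower area bound on closed curves in $M$. Compactness therefore extends the standard family all the way through $M$ and produces a continuous $D^2$-foliation of $M$ over the base $\MM\cong T^d\times B^{2n-1-d}$; evaluating the disks identifies $M$ topologically with $T^d\times D^{2n+1-d}$, and standard smoothing arguments upgrade this to a diffeomorphism whenever $n\geq 3$, whereas the $5$-dimensional case $n=2$ yields only a homotopy equivalence owing to the well-known low-dimensional transversality and smoothing obstruction. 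The main obstacle of the whole argument is precisely this compactness/breaking step: the bound $\inf_0(\alpha)\geq\pi$ is delicately tight against the maximal disk area $\pi$, and one must simultaneously execute the localised transversality on the non-compact $W$ without disturbing the frozen boundary family on the $Z$-end.
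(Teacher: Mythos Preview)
Your proposal has the right large-scale architecture --- glue $M$ into the model, fill by holomorphic discs, use the bound on contractible Reeb periods to rule out breaking --- but two of the load-bearing steps are not set up correctly, and the topological endgame is essentially missing.

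\medskip

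\textbf{The analytic setup.}
You place the discs inside the glued \emph{contact} manifold $W$ with boundary on the hypersurface $\varphi(\partial M)$. That is not a well-posed holomorphic disc problem: $W$ is odd-dimensional, and $\varphi(\partial M)$ has dimension $2n$, so it cannot serve as a totally real or Lagrangian boundary condition in any natural ambient almost complex manifold. The paper instead works in the \emph{symplectisation} $\bigl(\R\times\hat{M},\,\rmd(\tau\hat{\alpha})\bigr)$ and uses a family of $(n{+}1)$-dimensional Lagrangian cylinders
\[
L^{\bft}_{\bfp}
=\{0\}\times\R\times T^d\times\{\bfp\}\times\partial D^2\times\R^{n-1-d}\times\{\bft\}
\subset\{0\}\times\partial Z
\]
as boundary conditions, and the standard discs are obtained from the Niederkr\"uger biholomorphism $\Phi(a,b;p)=(a-\psi(p)+\rmi b,p)$, not from the varying-radius family you write down. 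In particular all discs in $\WW$ have symplectic energy exactly $\pi$, which is what drives the indecomposability lemma, simplicity, and the compactness argument; your varying areas $\pi(1-r_0^2)$ would not give a clean dichotomy. The choice $Q=T^d$ is used here in an essential way, through the plurisubharmonic potential $\psi=\tfrac12|\bfp|^2+\cdots$ and the Lagrangian fibration $T^d\times\{\bfp\}$, to get the maximum-principle $C^0$-bounds.

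\medskip

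\textbf{The topological conclusion.}
The evaluation map does \emph{not} produce a disc foliation of $M$ or a direct identification with $T^d\times D^{2n+1-d}$; it is only a proper degree-one map $\ev\co\WW\times\D\to\hat{Z}$. From this the paper extracts that $T^d\hookrightarrow M$ is surjective on $H_*$ and $\pi_1$, shows $\partial M\hookrightarrow M$ is a $\pi_1$-isomorphism, and then builds a cobordism $X=M_1\setminus\Int M_0$ between two copies of $T^d\times S^{2n-d}$. Proving that $X$ is an $h$-cobordism requires rerunning the whole disc argument on the \emph{universal cover} $\widetilde{M}$ to see that $\widetilde{M}$ is contractible. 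The theorem then follows from $\mathrm{Wh}(\Z^d)=0$ and the $s$-cobordism theorem. The restriction to homotopy equivalence when $n=2$ comes from the dimension hypothesis of the $s$-cobordism theorem ($\dim X=2n+1=5$ is too small), not from any transversality or smoothing obstruction in the moduli space.
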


Observe the standing assumption $n\geq d$.
The extra condition we require in the theorem is $n\neq d$
meaning that $Q$ is {\bf subcritically isotropic}.
This assumption allows us the use of holomorphic discs
as a $\C$-factor can be split off in the model situation.

The case $n=1$ is covered by the work of Eliashberg--Hofer \cite{eh94};
the critical case $n=1$ and $d=1$,
without any shortness assumption on $\inf_0(\alpha)$,
is proved by Kegel--Schneider--Zehmisch \cite{kschz}
using a different method.
Therefore, 
we assume $n\geq2$ (and $n>d$) throughout the article.
The case $d=0$ is due to Geiges--Zehmisch \cite{gz16b}
in which even being diffeomorphic can be concluded.

Performing contact connected sum
of $Z$ with any contact manifold
one obtains a periodic Reeb orbit
of period strictly less than
(but arbitrarily close to) $\pi$
contained in the belt sphere,
cf.\ \cite[Remark 1.3.(1)]{gz16b}.
Hence,
the bound $\pi$ in Theorem \ref{thm:mainthm} is optimal.
Moreover,
the contrapositive of Theorem \ref{thm:mainthm}
can be used to prove existence
of periodic Reeb orbits on non-compact manifolds.
Consider a compact contact manifold $(M,\xi)$
whose boundary has precisely two connected components
each admitting a contact embedding into $(Z,\alpha_Z)$
individually.
Using the Reeb flow on the model $(Z,\alpha_Z)$
the images can be assumed to be not nested
so that a gluing of $(M,\xi)$ to $(Z,\alpha_Z)$
along the boundary is possible.
The gluing result can not be homotopy equivalent to
$T^d\times D^{2n+1-d}$
so that any contact form on $(M,\xi)$
standard near the boundary possesses
a contractible periodic Reeb orbit,
cf.\ \cite[Remark 1.3.(4)]{gz16b}.

For existence results of periodic Reeb orbits
on non-compact contact manifolds
with asymptotic and periodic boundary conditions
we refer to the work of
Suhr--Zehmisch \cite{sz16}
and Bae--Wiegand--Zehmisch \cite{bwz},
respectively.


\subsection{Filling by holomorphic discs\label{subsec:filling}}

The basic idea of the proof of Theorem \ref{thm:mainthm}
is the same as described in \cite[Section 1.2]{gz16b}
invoking {\it filling by holomorphic discs}
techniques as worked out in
\cite{kbdiss,gz10,gz13,gz16a,gz16b,zeh16}.
Using the contact embedding $\varphi$ of $\partial M$
into $(Z,\alpha_Z)$ we form a new strict contact manifold
$(\hat{M},\hat{\alpha})$ by replacing the bounded component
of $\R\times T^*T^d\times\C^{n-d}\setminus\varphi(\partial M)$
by $M$.
The contact form $\hat{\alpha}$ equals $\alpha$
on $M$ and
\[
\rmd b
+\bfp\!\;\rmd\bfq
+\frac12\big(x_0\rmd y_0-y_0\rmd x_0\big)
-\bfy\rmd\bfx
\]
on the unbounded component of
$\R\times T^*T^d\times\C^{n-d}\setminus\varphi(\partial M)$.
Observe that the latter contact form
coincides with $\alpha_Z$ on $Z$.
We define $\hat{Z}$ similarly by gluing $M$ into $Z$.

Further, we will consider holomorphic maps
\[
u=(a,f)\co\D\lra W
\]
defined on the closed unit disc $\D\subset\C$
and taking values in the symplectisation
$W$ of $(\hat{M},\hat{\alpha})$
subject to varying Lagrangian boundary conditions.
The moduli space $\WW$ of all such holomorphic discs
carries an evaluation map
\[
 \begin{array}{rccc}
  \ev\co & \WW\times\D     & \longrightarrow & \hat{Z}\\
            & \bigl( (a,f),z\bigr) & \longmapsto     & f(z)\,.
 \end{array}
\]
{\it A priori} $\ev$ takes values in $\hat{M}$,
but we will show that $\ev$ indeed takes values
in the smaller set $\hat{Z}\subset\hat{M}$.
It will turn out that
either the evaluation map $\ev$ is proper and surjective of degree one,
in which case we can draw conclusions with the $s$-cobordism theorem
as in the work of Barth--Geiges--Zehmisch \cite{bgz},
or the moduli space $\WW$ is not locally compact
in the sense that there will be breaking off of finite energy planes.
By a result of Hofer \cite{hof93,hof99}
this in turn results in the existence of short
contractible periodic Reeb orbits of $\alpha$
as the Reeb flow of $\alpha_Z$ is linear,
given by $\partial_b$.

Observe that a contact embedding
of $\partial M$ into $(Z,\alpha_Z)$
yields an embedding of $\partial M$
into $\R\times T^*T^d\times D^2_r\times\C^{n-1-d}$
for some slightly smaller radius $r\in(0,1)$.
The proof of Theorem \ref{thm:mainthm}
that we are going to present in this work
will show that being short for a contractible
periodic Reeb orbit should mean to have period
less than or equal to $\pi r^2$.
Therefore, the above mentioned second alternative
will be excluded by requiring $\inf_0(\alpha)>\pi r^2$
as an Arzel\`a--Ascoli argument shows.
For ease of notation we will assume $r=1$
so that we assume the stronger condition
$\inf_0(\alpha)>\pi$
during the proof.


\subsection{Relevance of the torus\label{subsec:reltorus}}

Large parts of the argument
work under considerably weaker assumptions --
mainly the topological part,
which is similar to \cite{bgz}.
In order to set up
the holomorphic disc analysis
we use a foliation by Lagrangian submanifolds of $T^*Q$
as parametrised boundary condition.
Moreover,
we use a choice of strictly plurisubharmonic potential
for the Liouville form on $T^*Q$
that together with the maximum principle
ensures $C^0$-bounds in the compactness argument,
see Section \ref{subsubsec:maxprinc}.
This together with the Niederkr\"uger map,
which we use to construct holomorphic discs,
works particularly well in global (periodic) coordinates on $T^*Q$.
It is not clear how to change the setup 
to enlarge the class of examples.


\section{Standard holomorphic discs\label{sec:stholdiscs}}

The model contact manifold $(Z,\alpha_Z)$
is the contactisation of the Liouville manifold
\[
(V,\lambda_V):=
\Big(
T^*T^d\times D^2\times\C^{n-1-d},\,
\bfp\!\;\rmd\bfq
+\frac12\big(x_0\rmd y_0-y_0\rmd x_0\big)
-\bfy\rmd\bfx
\Big)
\,,
\]
which contains the holomorphic discs
$\{\bfw\}\times D^2\times\{\bfs+\rmi\bft\}$.
The aim of this section is to describe a lift
of these holomorphic discs to the symplectisation
of $(Z,\alpha_Z)$.
These holomorphic discs will appear
as the standard discs of the moduli space $\WW$
and serve as a description of the end of $\WW$.
In order to lift we proceed in two steps.
The first will be a lift to
$\C\times T^*T^d\times D^2\times\C^{n-1-d}$;
the second is a transformation along a biholomorphic map
$\Phi$ from $\R\times\R\times T^*T^d\times D^2\times\C^{n-1-d}$ to
$\C\times T^*T^d\times D^2\times\C^{n-1-d}$,
the Niederkr\"uger map from \cite[Proposition 5]{nie06}.


\subsection{The contactisation\label{subsec:contactisation}}

Following the explanations from \cite[Section 2]{gz16b}
we denote the Liouville manifold form
the beginning of Section \ref{sec:stholdiscs}
by $(V,\lambda_V)$
so that its contactisation
$(\R\times V,\rmd b+\lambda_V)$
is equal to the strict contact manifold $(Z,\alpha_Z)$.
The corresponding contact structure $\xi_Z$
is given by the set of tangent vectors
$v-\lambda_V(v)\partial_b$ for all $v\in TV$.


\subsection{Liouville manifold and K\"ahler potential\label{subsec:liouandpot}}

The Liouville manifold $(V,\lambda_V)$
admits a complex structure
\[
J_V:=(-\rmi)\oplus\rmi\oplus\rmi\,,
\]
where $-\rmi$ is meant to be
the negative of the complex structure on $T^d$
obtained by the quotient of $\R^d$ by $2\pi\Z^d$
and $T^*\R^d\equiv\R^{2n}\equiv\C^d$
such that $-\rmi$ is an almost complex structure on $T^*T^d$
compatible with $\rmd\bfp\wedge\rmd\bfq$,
cf.\ \cite[Appendix B]{nie06}.
A strictly plurisubharmonic potential $\psi$
in the sense of \cite[Section 3.1]{gz12}
so that $J_V$ is compatible
with the symplectic form $\rmd\lambda_V$
and $\lambda_V=-\rmd\psi\circ J_V$
is given by
\[
\psi\big(\bfw,z_0,\bfz\big):=
\frac12\sum_{j=1}^dp_j^2+
\frac14|z_0|^2+
\frac12\sum_{j=1}^{n-1-d}y_j^2
\,,
\]
where the point $\bfw\in T^*T^d$
is written in coordinates as
$(q_1,p_1,\ldots,q_d,p_d)$ and
$\bfz=z_1,\ldots,z_{n-1-d}$,
where $z_j=x_j+\rmi y_j$, $j=0,1,\ldots,n-1-d$,
denote coordinates
on $D^2\times\C^{n-1-d}$.


\subsection{The symplectisation\label{subsec:symplectisation}}

For any positive, strictly increasing smooth function
$\tau\equiv\tau(a)$ on $\R$ the symplectisation of $(Z,\alpha_Z)$
is defined to be the symplectic manifold
\[
\big(\R\times Z, \rmd(\tau\alpha_Z)\big)\,.
\]
A compatible and translation invariant
almost complex structure
that preserves the contact hyperplanes $\xi_Z$
on all slices $\{a\}\times Z$
is determined by $\partial_a\mapsto\partial_b$
and the requirement that for all $v\in TV$ the tangent vectors
$v-\lambda_V(v)\partial_b$ get mapped to
$J_Vv-\lambda_V(J_Vv)\partial_b$.
With that choice of an almost complex structure
on $\R\times\R\times V$
the {\bf Niederkr\"uger map}
\[
\Phi(a,b\,;p)=\big(a-\psi(p)+\rmi b,p\big)
\]
is a biholomorphism onto $\C\times V$
equipped with the almost complex structure
$\rmi\oplus J_V$.


\subsection{The Niederkr\"uger transform\label{subsec:niedtrans}}

The resulting holomorphic discs maps
\[
\D\lra
\R\times\R\times T^*T^d\times D^2\times\C^{n-1-d}
\,,
\]
to which we refer as being {\bf standard},
can be parametrised by
\[
u_{\bfs,b}^{\bft,\bfw}(z)=
\Big(
\tfrac14\big(|z|^2-1\big),b\,;\bfw,z,\bfs+\rmi\bft
\Big)
\]
for parameters $b\in\R$, $\bfw\in T^*T^d$,
and $\bfs,\bft\in\R^{n-1-d}$,
cf.\ \cite[Section 2.2]{gz16b}.
Natural Lagrangian boundary conditions for the restrictions
of the standard holomorphic disc maps to $\partial\D$
are given by Lagrangian cylinders
\[
L^{\bft}_{\bfp}:=
\{0\}\times\R\times T^d\times\{\bfp\}\times\partial D^2
\times\R^{n-1-d}\times\{\bft\}
\]
parametrised by $\bft\in\R^{n-1-d}$ and $\bfp\in\R^d$,
which foliate $\{0\}\times\partial Z$.
In order to verify $L^{\bft}_{\bfp}$ to be Lagrangian
observe that the restriction of $\rmd(\tau\alpha_Z)$
to the tangent bundle of $\{0\}\times Z$ equals
$\tau(0)\rmd\alpha_Z$,
which is a positive multiple of
\[
\rmd\bfp\wedge\rmd\bfq
+\rmd x_0\wedge\rmd y_0
+\rmd\bfx\wedge\rmd\bfy
\,,
\]
and that $L^{\bft}_{\bfp}$ is of dimension $n+1$.


\section{A boundary value problem\label{sec:abdryvalprob}}

Let $(W,\omega)$ be the symplectisation
\[
(W,\omega):=\big(\R\times\hat{M}, \rmd(\tau\hat{\alpha})\big)
\]
of the glued
strict contact manifold $(\hat{M},\hat{\alpha})$
introduced in Section \ref{subsec:filling},
where $\tau$ is a positive,
strictly increasing smooth function on $\R$
such that $\tau(a)=\rme^a$ for all $a\geq0$.


\subsection{An almost complex structure\label{subsec:analmcpxstr}}

Let $J$ be a compatible
almost complex structure on $(W,\omega)$
that is invariant under translations in $\R$-direction,
sends the coordinate vector field $\partial_a$
to the Reeb vector field of the contact from $\hat{\alpha}$,
and restricts to a compatible complex bundle structure on
$\big(\hat{\xi},\rmd\hat{\alpha}\big)$,
where $\hat{\xi}$ denotes the contact structure
defined by $\hat{\alpha}$.
Observe that the required conditions for $J$
are satisfied simultaneously for all admissible $\tau$.

We would like to specify a choice
of almost complex structure $J$
in order to deal with the non-compactness of $\hat{M}$.
For positive real numbers $b_0,r,R$
we define the {\bf box} by
\[
B
:=[-b_0,b_0]
\times D_RT^*T^d
\times D^2_r
\times D^{2n-2-2d}_R
\,,
\]
where $D^{2\ell}_{\rho}\subset\C^{\ell}$ denotes
the closed $2\ell$-disc of radius $\rho$
and $D_{\rho}T^*T^d$ is the closed $\rho$-disc
subbundle of $T^*T^d$.
We choose $r<1$ so that the box is contained in $Z$
and require that the interior of the box
contains $\varphi(\partial M)$,
i.e.\
\[
\varphi(\partial M)\subset \Int(B)\subset Z\,.
\]
Similarly to the use of the symbols $\hat{M}$ and $\hat{Z}$
we write $\hat{B}$ for the result of gluing $M$ into $B$.
Observe the chain of strict inclusions
\[
M\subset\hat{B}\subset\hat{Z}\subset\hat{M}
\,.
\]
On the complement of $\R\times\Int(\hat{B})$
we require the almost complex structure $J$
to be the one defined in Section \ref{sec:stholdiscs}
with the obvious modification of the construction
by taking the contactisation
of $\R\times T^*T^d\times\C\times\C^{n-1-d}$
instead of $Z$.
On $\R\times\Int(\hat{B})$ the choice of $J$
will be subject to genericity considerations
specified in Section \ref{subsubsec:lincrop}.


\subsection{The moduli space\label{subsec:themodspace}}

Let $\WW$ be the {\bf moduli space}
of all holomorphic discs
\[
u=(a,f)\co\D\lra (W,J)
\]
for which there exists a {\bf level}
$(\bfp,\bft)\in\R^d\times\R^{n-1-d}$
selecting the Lagrangian boundary cylinder
$L^{\bft}_{\bfp}$ in $\{0\}\times\partial Z$
such that the following boundary condition
is satisfied:
\[
u(\partial\D)\subset L^{\bft}_{\bfp}
\,.
\]
In particular,
the map $\Psi\circ f$,
which is defined in a neighbourhood of $\partial\D$,
is constant along $\partial\D$
setting $\Psi(b,\,.\,)=\psi$ for all $b\in\R$.
Additionally, we require that for all $u\in\WW$
there exist sufficiently large
parameters $b\in\R$, $\bfw=(\bfq,\bfp)\in T^*T^d$,
and $\bfs\in\R^{n-1-d}$ such that the standard disc
$u_{\bfs,b}^{\bft,\bfw}$ -- $(\bfp,\bft)$ being the level of $u$ --
can be regarded as holomorphic disc in $(W,J)$
and is homologous to $u$ in $W$ relative $L^{\bft}_{\bfp}$,
i.e.\
\[
[u]=[u_{\bfs,b}^{\bft,\bfw}]
\qquad\text{in}\quad
H_2(W,L^{\bft}_{\bfp})
\,.
\]
Because of $n\geq2$
all standard holomorphic discs of the same level
are homotopic relative boundary
so that the homological condition is well posed.
The reparametrisation group
-- the group of biholomorphic diffeomorphisms of $\D$ --
is divided out by the requirement
\[
f(\rmi^k)\in
\R\times T^d\times\{\bfp\}\times\{\rmi^k\}
\times\R^{n-1-d}\times\{\bft\}
\qquad\text{for}\quad
k=0,1,2
\,,
\]
i.e.\ $u$ is required to map
the marked points $1,\rmi,-1$
to the characteristic leaves
$L^{\bft}_{\bfp}\cap\{z_0=1\}$,
$L^{\bft}_{\bfp}\cap\{z_0=\rmi\}$,
and $L^{\bft}_{\bfp}\cap\{z_0=-1\}$,
respectively.


\subsection{Convergence\label{subsec:convergence}}

We will study $C^{\infty}$-compactness
properties holomorphic discs in $\WW$.
In the following we list
elementary properties
that all $u=(a,f)\in\WW$ share:


\subsubsection{Uniform energy bounds\label{subsubsec:energybounds}}

 The $L^2$-norm of the gradient is uniformly bounded
 in the sense that the {\bf symplectic energy}
 $\int_{\D}u^*\omega$,
 which is equal to the {\bf action}
 $\int_{\partial\D}f^*\hat{\alpha}$
 of the boundary circle,
 is equal to $\pi$.
 This follows as in \cite[Lemma 3.2]{gz16b} because $u$
 is homologous to a certain standard disc.


\subsubsection{$C^0$-bounds and maximum principle\label{subsubsec:maxprinc}}

 As it is the case for any holomorphic curve
 $u=(a,f)$ in symplectisations
 the function $a$ is subharmonic,
 cf.\ \cite[Lemma 3.6.(i)]{gz16b}.
 In the situation at hand we conclude
 with the arguments from \cite[Lemma 3.6.(i)]{gz16b}
 that $a<0$ on $\Int(\D)$ for all $u=(a,f)\in\WW$.
 
 In order to describe the behaviour
 of $u=(a,f)\in\WW$
 in the direction of $\hat{M}$
 we denote by $G$ the $f$-preimage
 of $\hat{M}\setminus\Int(\hat{B})$.
 Namely,
 on $G$ we can introduce coordinate functions
 \[
 f=(b,\bfw,h_0,\bfh)
\qquad\text{on}\quad
\R\times T^*T^d\times\C\times\C^{n-1-d}
 \]
 according to the indicated splitting.
 By the properties of the Niederkr\"uger map
 the coordinate function $b$ is harmonic
 and the $\bfw,h_0,\bfh$ are holomorphic.
 
 If $G=\D$,
 then $u$ will be one of the standard discs
 $u_{\bfs,b}^{\bft,\bfw}$
 sitting in the complement of $\R\times\Int(\hat{B})$.
 The argument for that is the same as for
 \cite[Lemma 3.7]{gz16b} with the following
 additional observation:
 The holomorphic map $\bfw\co\D\ra T^*T^d$
 lifts to a holomorphic map to the universal cover
 resulting into an anti-holomorphic disc map into $\C^d$
 with boundary circle $\partial\D$
 mapped into a totally real affine plane $\R^d\times\{\bfp\}$.
 Hence, as in \cite[Lemma 3.7]{gz16b}
 or by Schwarz reflection $\bfw$ must be constant.
 An alternative argument is based on the fact
 that the symplectic energy of $\bfw$,
 which is equal to the Dirichlet energy, vanishes,
 so that -- again -- $\bfw$ must be constant.
 Denoting the level of $u$ by $(\bfp,\bft)$
 we use a retraction of $T^*T^d$ to $T^d\times\{\bfp\}$
 to homotope the disc $\bfw$ into
 the Lagrangian submanifold $T^d\times\{\bfp\}$
 relative boundary.
 As the homotoped disc has vanishing symplectic energy
 the symplectic energy of $\bfw$ vanishes
 by Stoke's theorem too.
 
 In the situation that $G$ is a proper subset of $\D$
 we will make the following observations:
 By construction,
 $G$ contains a neighbourhood of $\partial\D$
 so that the strong maximum principle
 and the boundary lemma of E.\ Hopf
 apply to $h_0$.
 Indeed, as in \cite[Lemma 3.6.(ii)]{gz16b}
 we conclude that $f(\Int(\D))$ is contained in $\Int(\hat{Z})$.
 Moreover, by the comments on \cite[p.~669 and p.~671]{gz16b}
 we see that $h_0$ restricts to an immersion on $\partial\D$
 so that $u(\partial\D)$ is positively transverse
 to each of the characteristic leaves
 $L^{\bft}_{\bfp}\cap\{z_0=\rme^{\rmi\theta}\}$,
 $\theta\in[0,2\pi)$, denoting the level of $u$ by $(\bfp,\bft)$.
 By the homological condition
 posed by the boundary value problem for $u=(a,f)\in\WW$
 we infer that $h_0$ restrict in fact to an embedding on $\partial\D$.
 
 Continuing the discussions on the case $G\neq\D$
 we observe that by the arguments in \cite[Lemma 3.8]{gz16b}
 the coordinate function $b$ of $u$,
 as $u$ can not be a standard disc,
 takes values in $[-b_0,b_0]$.
 For that recall that $\Psi\circ f$
 is constant along $\partial\D$.
 Similarly, there exists a real number $R_0>R$
 such that the intersection of the non-standard disc
 $f(\D)$ with
 \[
 \R
 \times\Big(T^*T^d\setminus D_{R_0}T^*T^d\Big)
 \times\C
 \times\Big(\C^{n-1-d}\setminus D^{2n-2-2d}_{R_0}\Big)
 \]
 is empty.
 The projection $\bfh$ to the $\C^{n-1-d}$-factor
 can be treated similarly to \cite[Lemma 3.9]{gz16b}.
 For the cotangent factor notice
 that a composition of local lifts of $\bfw$
 with complex conjugation on the universal cover
 results in local holomorphic maps w.r.t.\
 the standard complex structure on $\C^d$.
 Therefore, the maximum principle implies then
 that $\bfw$ takes values in the codisc bundle
 of radius $|\bfp|$.
 In view of the uniform energy bounds
 stated in Section \ref{subsubsec:energybounds}
 the monotonicity lemma gives an upper bound
 on the level $|\bfp|$ of $u$ analogously
 to the arguments on \cite[p.~674]{gz16b}
 (by possibly using several geodesic balls of the same radius
 less than or equal to $\pi$
 to cut area out of the holomorphic disc $\bfw$).
 
 In conclusion,
 we obtain uniform $C^0$-bounds
 on the $\hat{M}$-part of all non-standard
 holomorphic discs in $\WW$.


\subsubsection{Compactness\label{subsubsec:compactness}}

The space of non-standard holomorphic discs
in $\WW$ is $C^{\infty}$-compact under the assumption
that all contractible periodic Reeb orbits
of $\hat{\alpha}$ have action greater than $\pi$.
This follows with the arguments in \cite[Section 4]{gz16b}
and \cite{fr08,fz15},
and an identification of $T^*T^d$ with $T^d\times\R^d$
so that the variation of the boundary condition $T^d\times\{\bfp\}$
in the cotangent factor of $L^{\bft}_{\bfp}$
can be described with help of translations in $\R^d$.
What remains to show is the following
indecomposability statement:

\begin{lem}
 \label{lem:indecomposable}
 The homology class $[u]$ of all $u\in\WW$
 in $H_2(W,L^{\bft}_{\bfp})$, where $(\bfp,\bft)$
 denotes the level of $u$, is $J$-indecomposable. 
\end{lem}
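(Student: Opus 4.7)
The plan is to argue by contradiction and exhaust every component type that a non-trivial splitting could produce, using the area budget $\pi$ from Section~\ref{subsubsec:energybounds}, exactness of $\omega=\rmd(\tau\hat\alpha)$, and the standing hypothesis $\inf_0(\hat\alpha)>\pi$. Suppose $[u]=[v_1]+\cdots+[v_k]$ in $H_2(W,L^{\bft}_{\bfp})$ with $k\geq 2$ and each $v_i$ a non-constant $J$-holomorphic curve of positive symplectic area, the areas summing to $\pi$. The admissible components are $J$-holomorphic spheres in $W$, finite energy planes in the cylindrical ends, and holomorphic discs with boundary on $L^{\bft}_{\bfp}$.

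Spheres and planes would go first. Any $J$-holomorphic sphere has zero $\omega$-area by exactness, hence is constant. A non-constant finite energy plane is asymptotic to a closed Reeb orbit of $\hat\alpha$ contractible in $\hat M$ by Hofer \cite{hof93,hof99}, and its area equals the period of that orbit, which exceeds $\pi$ by hypothesis --- already overshooting the total budget.

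It then remains to deal with disc components. A direct computation of $\hat\alpha|_{L^{\bft}_{\bfp}}=\rmd b+\bfp\,\rmd\bfq+\tfrac12\,\rmd\theta-\bft\,\rmd\bfx$ (with $z_0=\rme^{\rmi\theta}$) shows that a loop on $L^{\bft}_{\bfp}$ with $\partial D^2$-winding $k_0\in\Z$ and $T^d$-winding $\mathbf k\in\Z^d$ has action $\pi k_0+2\pi\bfp\cdot\mathbf k$ (the $\rmd b$ and $-\bft\,\rmd\bfx$ terms integrate to zero on any loop), and Stokes identifies this with the symplectic area of the filling disc. Additivity of boundary classes combined with the standard-disc winding data $(1,\mathbf 0)$ of $[u]$ gives $\sum k_0^i=1$ and $\sum \mathbf k_i=\mathbf 0$. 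The decisive step is to show $\mathbf k_i=\mathbf 0$ for each $i$: the maximum principle for $|h_0|$, whose boundary values on $L^{\bft}_{\bfp}$ are identically $1$, confines each $v_i$ to $\hat Z$, so $\bfw$ is globally defined on $v_i$; lifting to the universal cover of $T^*T^d$ produces a holomorphic disc map into $\C^d$ with boundary in the totally real plane $\R^d\times\{\bfp\}$, which is forced to be constant either by Schwarz reflection or by vanishing of the Dirichlet energy after a boundary-preserving retraction into $T^d\times\{\bfp\}$, following Section~\ref{subsubsec:maxprinc}. With $\mathbf k_i=\mathbf 0$, the action of $v_i$ reduces to $\pi k_0^i$; holomorphicity of $h_0\co v_i\to\overline{D}^2$ with boundary on $\partial D^2$ forces $k_0^i\geq 0$, and $k_0^i=0$ together with $\mathbf k_i=\mathbf 0$ gives zero area and hence constancy. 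So every non-trivial $v_i$ satisfies $k_0^i\geq 1$, and $\sum k_0^i=1$ admits only a single non-trivial term, contradicting $k\geq 2$.

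The hardest part is ensuring that the maximum-principle and universal-cover arguments of Section~\ref{subsubsec:maxprinc}, originally stated for the non-standard smooth discs of $\WW$, apply to every component of a putative decomposition --- including disc bubbles whose image might a priori enter the box $\hat B$, where the coordinates $(\bfw,h_0)$ are not defined. Verifying that the same boundary behaviour still forces each such bubble into $\hat Z$, so that the coordinate analysis makes sense globally on it, is where the technical work lies.
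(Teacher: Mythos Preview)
Your argument has a real gap at the step claiming that confinement to $\hat Z$ makes $\bfw$ globally defined on each bubble $v_i$. Recall that $\hat Z$ is built by gluing $M$ into $Z$, so $M\subset\hat B\subset\hat Z$; the cotangent coordinate $\bfw$ simply does not exist on $M$. The maximum principle for $|h_0|$ does confine each $v_i$ to $\hat Z$ --- that part is straightforward, contrary to what your last paragraph suggests --- but this in no way keeps $v_i$ out of $M$, and once the disc enters $M$ the lifting and Schwarz-reflection argument from Section~\ref{subsubsec:maxprinc} is unavailable. There is therefore no reason for the individual $T^d$-windings $\mathbf k_i$ to vanish: a bubble may well carry a non-trivial $T^d$-loop on its boundary and cap it off through the topology of $M$.

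The paper avoids this entirely by never asking for $\mathbf k_i=\mathbf 0$ individually. Only the \emph{sum} of the $T^d$-action contributions matters, and that is purely homological: from $\sum_j m_j[\gamma^j]=[\gamma]$ in $H_1(L^{\bft}_{\bfp})$ one reads off that the total $T^d$-winding equals that of the standard disc, namely zero, so the $\bfp\!\cdot\!\mathbf k$ terms cancel in aggregate without any control on the separate $\mathbf k_i$. The energy identity then collapses to $\pi=\sum_j m_j n_j\pi$ with each $\partial D^2$-winding $n_j\geq 1$ by positive transversality (which uses only the behaviour of $h_0$ near $\partial\D$, where the image lies in the model region since $L^{\bft}_{\bfp}\subset\{|z_0|=1\}$ is disjoint from $\hat B$), forcing a single term. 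Your detour through finite energy planes is also unnecessary here: $J$-indecomposability concerns compact curves, and planes do not represent classes in $H_2(W,L^{\bft}_{\bfp})$.
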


\begin{proof}
 If not, we could find non-constant holomorphic discs
 $u^1,\ldots,u^N$ with boundary on $L^{\bft}_{\bfp}$
 such that $[u]$ can be decomposed into the sum
 \[
 [u]=m_1[u^1]+\ldots+m_N[u^N]
 \]
 for natural numbers $N$ and $m_1,\ldots,m_N$,
 where at least one of them is greater than $1$.
 By exactness of the symplectic form $\omega$
 none of the holomorphic maps $u^1,\ldots,u^N$
 can be defined on a sphere.
 Denote the restrictions to the boundary by
 $\gamma$ and $\gamma^1,\ldots,\gamma^N$,
 and observe that
 \[
 [\gamma]=m_1[\gamma^1]+\ldots+m_N[\gamma^N]
 \]
 in $H_1(L^{\bft}_{\bfp})$.
 As $L^{\bft}_{\bfp}$ is contained in the complement
 of $\Int(\hat{B})$ the boundary loops admit a splitting
 w.r.t.\
 \[
 \{0\}\times\R\times T^d\times\{\bfp\}\times\partial D^2
 \times\R^{n-1-d}\times\{\bft\}
 \,.
 \]
 The symplectic energy of holomorphic discs
 in $(W,\omega)$ is equal to the action
 of the boundary loops so that by
 positive transversality w.r.t.\ the characteristic leaves
 mentioned in Section \ref{subsubsec:maxprinc}
 we obtain
 \[
 \pi=
 \sum_{j=1}^Nm_j[\gamma^j_{T^d}]+
 \sum_{j=1}^Nm_jn_j\pi
 \]
 for natural numbers $n_1,\ldots,n_N$.
 The first summand equals the total action
 of the projections of $u^j|_{\partial\D}$ to $T^d\times\{\bfp\}$
 and so is in turn equal to the action of the corresponding projection
 $\gamma_{T^d}$ of $\gamma$.
 Because $[u]$ can be represented
 by a standard disc $u_{\bfs,b}^{\bft,\bfw}$, $\bfw=(\bfq,\bfp)$,
 whose boundary map equals
 \[
 \rme^{\rmi\theta}\longmapsto
 \Big(
 0,b\,;\bfw,\rme^{\rmi\theta},\bfs+\rmi\bft
 \Big)\,,
 \]
 the action of $\gamma_{T^d}$ vanishes.
 In total, we reach the inequality $\pi\geq N\pi$.
 Therefore, $N=1$ proving $J$-indecomposability.
\end{proof}


\subsection{Transversality\label{subsec:transversality}}

Each standard holomorphic disc in $\WW$
admits a neighbourhood that can be parametrised
by
\[
\Big(
b\,;\bfw,\bfs+\rmi\bft
\Big)
\in
\R\times T^*T^d\times\C^{n-1-d}
\,.
\]
We will show that a similar parametrisation
near each of the non-standard holomorphic discs
in $\WW$ exists so that $\WW$
will be a smooth manifold of dimension $2n-1$.


\subsubsection{Maslov index\label{subsubsec:maslovindex2}}

The Maslov index of all $u\in\WW$ is equal to $2$.
By the considerations in \cite[Lemma 3.1]{gz16b}
it is enough to compute
the Maslov index for all standard discs $u\in\WW$,
which lift up to standard holomorphic discs in
\[
\R\times\R\times T^*\R^d\times D^2\times\C^{n-1-d}
\equiv
\R\times\R\times D^2\times\C^{n-1}
\]
in the sense of \cite{gz16b}.


\subsubsection{Simplicity\label{subsubsec:simplicity}}

Using Lemma \ref{lem:indecomposable}
and \cite[Theorem A]{lazz11}
we see that all holomorphic discs
$u\in\WW$ are simple,
cf.\ \cite[Lemma 3.4]{gz16b}.
Based on that one shows as in 
\cite[Lemma 3.5]{gz16b}
that for all $u=(a,f)\in\WW$
the set of all $f$-injective points
is open and dense in $\D$.
One only has to observe
that the projection $h_0$ of $f$
to the $D^2$-factor is an embedding
along $\partial\D$ so that $u|_{\partial\D}$
is positively transverse to the characteristic leaves
-- as done in Section \ref{subsubsec:maxprinc}.


\subsubsection{Linearised Cauchy--Riemann operator\label{subsubsec:lincrop}}

Based on Section \ref{subsubsec:simplicity}
one chooses a regular almost complex structure $J$
as in \cite[Section 5.2]{gz16b}
so that the linearised Cauchy--Riemann operator $D_u$
is onto for all $u\in\WW$.
Taking variations of the level parameters
$(\bfp,\bft)\in\R^d\times\R^{n-1-d}$
for the Lagrangian boundary conditions
$L^{\bft}_{\bfp}$ induced by translations
similarly to \cite[Section 4.1]{gz16b}
one computes
using Section \ref{subsubsec:maslovindex2}
the Fredholm index of $D_u$
to be $2n+2$ as in \cite[Section 5.1]{gz16b}.
Subtracting $3$ for the marked points
fixed by three characteristic leaves yields $2n-1$,
which turns out to be the dimension of $\WW$.
In fact, $\WW$ is a smooth manifold
that admits a natural orientation
obtained by the orientation of $D_u$
described in \cite[Section 5.3]{gz16b}
-- observe that $L^{\bft}_{\bfp}$
admits a canonical parallelization and
and the above mentioned variations
of the Lagrangian boundary conditions $L^{\bft}_{\bfp}$.


\section{The homotopy type\label{sec:thehomotopytype}}

In Section \ref{sec:abdryvalprob} we showed
that the moduli space $\WW$ is a smooth,
naturally oriented manifold of dimension $2n-1$
such that the evaluation map
\[
 \begin{array}{rccc}
  \ev\co & \WW\times\D     & \longrightarrow & \hat{Z}\\
            & \bigl( (a,f),z\bigr) & \longmapsto     & f(z)
 \end{array}
\]
is proper and of degree one.
In this section we will use the evaluation map
in order to draw conclusions
on the homotopy type of the contact manifold $M$.


\subsection{Homology type and fundamental group\label{subsec:homologytypefundamantal}}

With the argumentation used in
\cite[Sections 2.3 and 2.5]{bgz}
and \cite[Section 6]{gz16b}
we obtain that the evaluation map
\[
\ev\co\WW\times\D\longrightarrow\hat{Z}
\]
is surjective in homology and $\pi_1$-surjective.
The restriction to $z_0=1$ is given by
\[
\ev\co\WW\times\{1\}
\longrightarrow
\bigcup_{(\bfp,\bft)\in\R^d\times\R^{n-1-d}}
L^{\bft}_{\bfp}\cap\{z_0=1\}
\]
the target being equal to
\[
\R\times T^*T^d\times\{1\}\times\C^{n-1-d}
\,.
\]
Both evaluation maps
complete to a commutative square
\begin{diagram}
\WW \times \D & 
	\rTo^{\qquad\qquad\ev} & 
	\hat{Z} \\
\uTo_\subset &
	&
	\uTo_\subset \\
\WW\times\{1\} & 
	\rTo^{\quad\quad\ev\quad\quad} &
	\R\times T^*T^d\times\{1\}\times\C^{n-1-d} \\
\end{diagram}
via the homotopy equivalence
\[
\WW\times\{1\}
\subset
\WW\times\D
\]
and the inclusion
\[
\R\times T^*T^d\times\{1\}\times\C^{n-1-d}
\subset\hat{Z}
\,.
\]
Therefore, the induced map
\[
T^d\subset\hat{Z}\simeq M
\]
is surjective in homology and $\pi_1$-surjective too.

Similarly to \cite[Section 2.4]{bgz} one shows
-- by simply replacing $2n$ by $2n+1$ --
that $H_kM=0$ for all higher degrees
$k\geq d+1$ and that the inclusion of
\[
\partial M\lra M
\]
induces an isomorphism for the homology groups $H_k$
of low degree
\[
k=0,1,\ldots,2n-1-d
\,.
\]
Therefore,
\[
H_*M=H_*\Big(T^d\times D^{2n+1-d}\Big)
\,.
\]
Because the fundamental group of $\partial M$
is abelian we infer with \cite[Section 2.5]{bgz}
that the inclusion $\partial M\subset M$
is $\pi_1$-isomorphic.
In particular,
\[
\pi_1M=\pi_1\Big(T^d\times D^{2n+1-d}\Big)
\,.
\]


\subsection{A cobordism\label{subsec:acobordism}}

Recall that $\hat{Z}$
is obtained by removing the bounded component
of the complement of $\varphi(\partial M)\subset\Int(Z)$ in $Z$
and gluing with $M$ along the boundaries via $\varphi$.
By assumption,
$\varphi(\partial M)$ is isotopic to the sphere bundle
$S\big(T^*T^d\oplus\underline{\R}^{2n+1-2d}\big)$
inside $Z$ viewed as a subset of
$\R\times T^*T^d\times D^2\times\C^{n-1-d}$.
Choosing a suitable bundle metric for
$T^*T^d\oplus\underline{\R}^{2n+1-2d}$
we assume that $\varphi(\partial M)$ is contained
in the interior of the corresponding disc bundle
$D\big(T^*T^d\oplus\underline{\R}^{2n+1-2d}\big)$.
After gluing $M$ to the unbounded component
in the total space we obtain 
a manifold denoted by $M_1$,
which deformation retracts onto $M$
and is homotopy equivalent to $\hat{Z}$.
Furthermore,
we denote by $M_0$
a possibly rescaled copy of 
$D\big(T^*T^d\oplus\underline{\R}^{2n+1-2d}\big)$
to which a nowhere vanishing section is added
-- before gluing --
so that $M_0$ is contained in $M_1\setminus M$ -- after gluing.
In the following we will study
homotopical properties
of the cobordism
\[
X:=M_1\setminus\Int M_0
\,.
\]

Denote by $D_0\subset\partial M_0$
and $D_1\subset\partial M_1$
in $M_1$ isotopic copies of the disc bundle of
$\R\times T^*T^d\times\{1\}\times\C^{n-1-d}$,
which strongly deformation retracts to $T^d$.
The isotopy between $D_0$ and $D_1$
can be chosen to be the restriction of
the above used shift and rescaling
isotopy between $\partial M_0$ and $\partial M_1$,
and extends to an isotopy between
$M_0$ and $M_1$.
Therefore, we obtain the following
homotopy commutative diagram:
\begin{diagram}
\HmeetV  &
	\rDashto &
	&
	&
	&
	&
	X \\
&
	&
	&
	&
	&
	\ldTo_{\substack{\textrm{gen.}\\\textrm{pos.}}} &
	\\
&
	&
	M_0 &
	\rTo^{\textrm{time-1 map}}_{\textrm{of isotopy}} &
	M_1 &
	&
	\\
\uDash&
	\ruTo_{\substack{\textrm{gen.}\\\textrm{pos.}}} &
	&
	&
	&
	\luDoubleto &
	\uDashto\\
\partial M_0 &
	&
	\uTo_{\simeq} &
	&
	&
	&
	\partial M_1 \\
\dLine &
	\luTo &
	 &
	&
	\uTo &
	\ruTo&
	\\
&
	&
	D_0 &
	\rTo^{\textrm{time-1 map}}_{\textrm{of isotopy}} &
	D_1 &
	&
	\uTo \\
\HmeetV &
	&
	&
	\rLine^{\substack{\\ \\ \\ \textrm{time-1 map}}}_{\textrm{of former isotopy}} &
	&
	&
	\HmeetV \\
\end{diagram}
where all indicated maps are obtained by inclusion
with the exception of the diffeomorphisms
$D_0\ra D_1$ and $\partial M_0\ra\partial M_1$.
The arrow $M_0\ra M_1$ can be alternatively understood
to mean the time-$1$ map of the described isotopy
besides the meaning of the inclusion.

As in \cite[Lemma 5.1 and 5.2]{bgz} one shows
that the inclusions
\[
\partial M_0, \partial M_1\lra X
\]
are $\pi_1$- and $H_*$-isomorphic.
Indeed,
in low degrees $k=0,1,\ldots,2n-1-d$
this follows with 
the results stated in
Section \ref{subsec:homologytypefundamantal}
and
general position arguments,
which are available whenever $k+d<2n+1$,
using $n>d$ and $n\geq 2$;
the arrows labeled by {\it gen.\ pos.} are
homotopy (resp. homology) isomorphisms in those degrees
in view of the induced long exact sequences.
In higher degrees $k\geq d+1$,
in which the homology groups of $M_0$ and $M_1$
vanish by Section \ref{subsec:homologytypefundamantal},
this follows with the induced long exact sequence
of the pair $(M_1,M_0)$, excision and
Poincar\'e duality
applied to the compact cobordism $X$
in combination with the universal coefficient theorem.

\begin{rem}
 \label{rem:isincliso?}
 The above arguments show
 that the inclusion of $M_0$ into $M_1$
 induces an isomorphism in homology.
 This is {\it a priori} not clear even if the involved
 homology groups are isomorphic.
\end{rem}


\subsection{Being an $h$-cobordism\label{subsec:bnhcob}}

In order to prove that the inclusions
$\partial M_0, \partial M_1\ra X$
are in fact homotopy equivalences
one shows that the topological pairs
$(X,\partial M_0)$ and $(X,\partial M_1)$
are homotopically trivial.
As these are homotopically trivial
an application of the relative Hurewicz theorem
shows that the quotients of
the relative homotopy groups
by the action of the fundamental group of $\partial M$,
which is isomorphic to $\Z^d$,
are trivial.

To conclude with the vanishing of the relative homotopy groups 
we will employ universal coverings as done in the argumentation
in \cite[Section 6]{bgz}.
The following proposition is based on the triviality
of $\pi_1(X,\partial M_0)$ and $\pi_1(X,\partial M_1)$
obtained in Section \ref{subsec:acobordism}:

\begin{prop}
\label{prop:xinanhcob}
 $X$ is an $h$-cobordism.
\end{prop}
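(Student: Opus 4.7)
The plan is to upgrade the inclusions $\partial M_i \hookrightarrow X$ for $i = 0, 1$ from $\pi_1$- and $H_*$-isomorphisms, already provided by Section \ref{subsec:acobordism}, to honest homotopy equivalences. By Whitehead's theorem applied to the finite CW pairs $(X, \partial M_i)$, this reduces to showing that every relative homotopy group $\pi_k(X, \partial M_i)$ vanishes. The case $k = 1$ is immediate from the $\pi_1$-isomorphism together with the long exact sequence of the pair. Since $\pi_1(\partial M_i) \cong \Z^d$ is carried isomorphically onto $\pi_1(X)$, the universal cover $\widetilde X \to X$ restricts on each boundary component to the universal cover $\widetilde{\partial M_i} \to \partial M_i$ of that component; in particular both $\widetilde X$ and $\widetilde{\partial M_i}$ are simply connected, and for $k \geq 2$ one has $\pi_k(X, \partial M_i) \cong \pi_k(\widetilde X, \widetilde{\partial M_i})$.

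The main step is therefore to show that $H_k(\widetilde X, \widetilde{\partial M_i}; \Z)$ vanishes in all degrees and for both boundary components. While Section \ref{subsec:acobordism} already provides the analogous vanishing downstairs, $H_k(X, \partial M_i; \Z) = 0$, this does not transfer automatically to the universal cover. To bridge the gap I would apply Poincar\'e--Lefschetz duality on the compact $(2n+1)$-dimensional cobordism $X$ with local coefficients in the Noetherian group ring $\Z[\Z^d]$, yielding isomorphisms
\[
H_k(\widetilde X, \widetilde{\partial M_0}) \cong H^{2n+1-k}(\widetilde X, \widetilde{\partial M_1})
\]
together with the companion version obtained by swapping $\partial M_0$ and $\partial M_1$. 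Feeding both integral vanishing statements into these duality isomorphisms, by means of a universal coefficient argument over $\Z[\Z^d]$, produces the desired vanishing of both $\Z[\Z^d]$-modules $H_*(\widetilde X, \widetilde{\partial M_i})$, mirroring the scheme of \cite[Section 6]{bgz}.

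With $H_*(\widetilde X, \widetilde{\partial M_i}) = 0$ on the simply connected pair $(\widetilde X, \widetilde{\partial M_i})$ in hand, the relative Hurewicz theorem applied inductively forces all $\pi_k(\widetilde X, \widetilde{\partial M_i})$, and hence all $\pi_k(X, \partial M_i)$ for $k \geq 2$, to vanish. Whitehead's theorem then promotes each inclusion $\partial M_i \hookrightarrow X$ to a homotopy equivalence, which is precisely the $h$-cobordism conclusion. The main obstacle is the homological transfer in the middle step: integral acyclicity of a relative pair with $\pi_1 = \Z^d$ does \emph{not} automatically imply acyclicity of its universal cover, as simple chain-complex examples over $\Z[\Z^d]$ demonstrate. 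The decisive structural input which breaks the deadlock is the presence of two boundary components carrying symmetric homological information, so that Lefschetz duality can interchange the vanishing statements across the cobordism and the Noetherian nature of $\Z[\Z^d]$ lets one close the circle.
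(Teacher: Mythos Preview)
Your overall architecture is sound, and you correctly identify the crux: passing from $H_*(X,\partial M_i;\Z)=0$ to $H_*(\widetilde X,\widetilde{\partial M_i};\Z)=0$. But the middle step as written is a genuine gap. There is no ``universal coefficient argument over $\Z[\Z^d]$'' that converts integral acyclicity of $(X,\partial M_i)$ into $\Z[\Z^d]$-acyclicity. The universal coefficient spectral sequence runs in the opposite direction (from $\Z[\pi]$-homology to $\Z$-homology), and Poincar\'e--Lefschetz duality with local coefficients only interchanges the two $\Z[\Z^d]$-modules $H_*(\widetilde X,\widetilde{\partial M_0})$ and $H^*(\widetilde X,\widetilde{\partial M_1})$; it gives you a symmetry, not a vanishing. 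Invoking that $\Z[\Z^d]$ is Noetherian does not close the circle: a finitely generated $\Z[\Z^d]$-chain complex can perfectly well be $\Z$-acyclic without being $\Z[\Z^d]$-acyclic, and having two such complexes related by duality does not change this. You yourself flag exactly this obstruction in your final paragraph, but then do not actually overcome it.

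The paper does not attempt an algebraic workaround here. Instead it supplies fresh \emph{geometric} input on the universal cover: one lifts the contact form to $\widetilde M$, builds the covering $\widetilde{\hat Z}$ by gluing $\widetilde M$ into $\widetilde Z=\R\times T^*\R^d\times D^2\times\C^{n-1-d}$, lifts $J$, and forms a moduli space $\WW'$ of holomorphic discs upstairs (a covering of $\WW$). The hypothesis $\inf_0(\alpha)>\pi$ is inherited by $\tilde\alpha$, the uniform $a$-bounds survive, and the evaluation map $\ev\colon\WW'\times\D\to\widetilde{\hat Z}$ is again proper of degree one. Running the diagram of Section~\ref{subsec:homologytypefundamantal} with $T^*\R^d$ in place of $T^*T^d$ now shows directly that $\widetilde M$ is \emph{contractible}. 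From that point on, the $h$-cobordism conclusion is indeed purely topological (as in \cite[Theorem~9.1]{bgz}), and your Hurewicz--Whitehead endgame applies verbatim. The missing idea in your proposal is precisely this re-run of the holomorphic disc analysis on the cover; the integral information downstairs does not suffice.
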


In order to prove Proposition \ref{prop:xinanhcob}
we consider the universal covering
$\pi\co\widetilde{M}\ra M$ provided with the
contact form $\tilde{\alpha}:=\pi^*\alpha$,
which satisfies $\inf_0(\tilde{\alpha})>\pi$.
Observe that the covering $\pi$ is infinite
as $\pi_1M=\Z^d$, see
Section \ref{subsec:homologytypefundamantal}.
The induced covering
on the boundary
$\pi|_{\partial\widetilde{M}}\co\partial\widetilde{M}\ra\partial M$
is given by the universal covering
$\R^d\times S^{2n-d}\ra T^d\times S^{2n-d}$
caused by the $\pi_1$-isomorphicity
of the inclusion $\partial M\subset M$, see again
Section \ref{subsec:homologytypefundamantal}.
Furthermore,
the universal covering space of $\hat{Z}$
is made up by the analogue
of the gluing construction of $\hat{Z}$
involving this time
$\widetilde{M}$ and
\[
\widetilde{Z}=
\R\times T^*\R^d\times D^2\times\C^{n-1-d}=
D^2\times\R^{2n-1}
\]
with the gluing map being a lift of $\varphi\circ\pi$.
The universal covering map of $\hat{Z}$
restricts to $\pi$ on $\widetilde{M}$.

Similarly to Section \ref{sec:abdryvalprob}
and \cite[Section 6]{bgz}
one defines a moduli space $\WW'$
of holomorphic discs
in the covering space of $\hat{Z}$
with respect to the lift
of the almost complex structure $J$.
This results into a covering
$\WW'\ra\WW$ of moduli spaces.
As in \cite[Lemma 6.1]{bgz} one shows
that the evaluation map
\[
\ev\co\WW'\times\D\longrightarrow\widetilde{\hat{Z}}
\]
is proper of degree $1$
because the projections of all holomorphic discs
in $\WW'$ to the $\R$-factor of the symplectisation
are contained in a uniform compact interval.
Alternatively, one can compensate
the non-compactness caused by
$\widetilde{M}$
with the results in \cite{bwz}
applied to the {\it trivial} virtually contact structure
given by the universal covering of $\hat{Z}$.
Here, by a {\bf virtually contact structure} we mean
a Riemannian covering together with a contact form primitive
of the pull back of an odd-symplectic form on the base
that is uniformly bounded from below and above.
The virtually contact structure is {\bf trivial}
if the contact form is obtained from a primitive
of the odd-symplectic from on the base by pull back.

As in Section \ref{subsec:homologytypefundamantal}
one considers a diagram
\begin{diagram}
\WW ' \times \D & 
	\rTo^{\qquad\qquad\ev} & 
	\widetilde{\hat{Z}} \\
\uTo_\subset &
	&
	\uTo_\subset \\
\WW ' \times\{1\} & 
	\rTo^{\quad\quad\ev\quad\quad} &
	\R\times T^*\R^d\times\{1\}\times\C^{n-1-d} \\
\end{diagram}
and concludes that $\widetilde{M}$ is contractible,
cf.\ \cite[Proposition 6.2]{bgz}
and its preceding remarks in \cite{bgz}.

\begin{proof}[{\bf Proof of Proposition \ref{prop:xinanhcob}}]
  With the above shown contractibility of $\widetilde{M}$
  the claim follows with the purely topological
  argumentation used in \cite[Theorem 9.1]{bgz}.
  Alternatively, one can follow the reasoning
  in \cite[Section 8]{bgz} or \cite[Section 2.5]{kbdiss}
  invoking simplicity of the topological space $\partial M$
  and the cobordism diagram from
  Section \ref{subsec:acobordism} similarly to \cite[Lemma 6.3]{bgz}.
\end{proof}

\begin{proof}[{\bf Proof of Theorem \ref{thm:mainthm}}]
 With Proposition \ref{prop:xinanhcob}
 the claim follows with standard arguments
 based on Whitehead's theorem, $\mathrm{Wh}(\Z^d)=0$,
 and the $s$-cobordism theorem
 as done in \cite{kbdiss,bgz}.
\end{proof}


\section{Appendix: Local contact inversion \label{app:loconinv}}

As in knot theory we call
the complement of the interior of a tubular neighbourhood
of a submanifold $Q$ the {\bf exterior} of $Q$.
We show that a collar extension of the exterior
of a subcritically isotropic torus in a contact manifold
admits a positive contact inversion
along the boundary of the exterior.
This allows surgerial constructions for contact manifolds
near subcritically isotropic tori
similar to the considerations in Section \ref{subsec:filling}.


\subsection{Hypersurfaces transverse to a Liouville flow\label{subsec:hyptrnliouvecfield}}

Let $(V,\lambda)$ be a Liouville manifold
with symplectic form $\omega=\rmd\lambda$
and Liouville vector field $Y$ defined by
$i_Y\omega=\lambda$.
Let $M_0$ and $M_1$ be hypersurfaces in $V$
that are transverse to $Y$
such that $\alpha_i:=\lambda|_{TM_i}$, $i=0,1$,
is a contact form.
We assume that
each flow line of $Y$
intersects each of $M_0$ and $M_1$ in a single point
so that the resulting correspondence
forms a bijection $M_0\ra M_1$.

Under the stated assumptions
we find a domain $D$ in $\R\times M_0$
that contains $\{0\}\times M_0$
such that for all $p\in M_0$
the intersection of $D$ with each line $\R\times\{p\}$
is the maximal interval on which the flow line
$t\mapsto\varphi_t(p)$ of $Y$ is defined.
This defines an
embedding $\Phi\co D\ra V$ of Liouville manifolds
(i.e.\ $\Phi^*\lambda=\rme^t\alpha_0$)
via $\Phi(t,p)=\varphi_t(p)$ for all $(t,p)\in D$
such that $M_1\subset\Phi(D)$
is the $\Phi$-image of the graph of a smooth function $f\co M_0\ra\R$
and $\alpha_1$ corresponds to the contact form
$\rme^f\alpha_0$ on $\Phi^{-1}(M_1)$.
Setting $\psi(0,p)=\big(f(p),p\big)$ for all $p\in M_0$
we obtain a strict contactomorphism
$\Phi\circ\psi\co(M_0,\rme^f\!\alpha_0)\ra(M_1,\alpha_1)$.


\subsection{A model involution\label{subsec:amodleinvolution}}

We consider the Liouville manifold
\[
\Big(T^*T^d\times\C^{n+1-d},
\bfp\!\;\rmd\bfq
+\frac12\big(\bfx\rmd\bfy-\bfy\rmd\bfx\big)\Big)
\]
with symplectic form
\[
\rmd\bfp\wedge\rmd\bfq
+\rmd\bfx\wedge\rmd\bfy
\]
and Liouville vector field
\[
Y_0=\bfp\!\;\partial_{\bfp}
+\frac12\big(\bfx\partial_{\bfx}+\bfy\partial_{\bfy}\big)\,,
\]
which is transverse to
\[
M=\Big\{|\bfp|^2+|\bfx|^2+|\bfy|^2=1\Big\}
\]
defining a contact form $\alpha_0$ on $M$.
The involution
\[
\iota(\bfq,\bfp;\bfx,\bfy)=(\bfq,\bfp;-x_1,x_2,\ldots,x_{n+1-d},-y_1,y_2,\ldots,y_{n+1-d})
\]
preserves the Liouville form
and the $M$-defining distance function
inducing a strict contactomorphism of $(M,\alpha_0)$.
Observe that $\iota$ interchanges
\[
M\cap\big\{\!\pm\!x_1\geq0\big\}\cong T^d\times D^{2n+1-d}
\]
as well as the isotropic tori
\[
T_{\pm}=\big\{(\bfq,{\mathbf 0};\pm\!1,0,\ldots,0)\,\big|\,\bfq\in T^d\big\}\,.
\]

We remark
that the (by a Hamiltonian vector field)
shifted Liouville vector field
\[
Y_1=Y_0+\frac14\partial_{x_1}
\]
defines the standard (rotationally invariant) contact form
\[
\alpha_1=\rmd t+\bfp\!\;\rmd\bfq
+\frac12\big(\bfx\rmd\bfy-\bfy\rmd\bfx\big)
\]
on
\[
M_1=T^*T^d\times\left\{x_1=\frac23\right\}\times\R_{y_1}\times\C^{n-d}
\equiv\R_t\times T^*T^d\times\C^{n-d}\,,
\]
where $y_1$ is renamed in $t$ and
where now $\bfx$ and $\bfy$
stand for the corresponding tuples
with $x_1$ and $y_1$ deleted.
With \cite[Example 2.1.3]{gei08}
$\alpha_1$ can be brought to $\alpha_Z$
by a strict contactomorphism.


\subsection{Interpolating Liouville vector fields\label{subsec:intliouvec}}

We continue the discussion from Section
\ref{subsec:amodleinvolution}.
Let $\chi\equiv\chi(\bfq,\bfp;\bfx,\bfy)$ be the cut off function
$\tilde{\chi}\big(|\bfp|^2+|\bfx|^2+|\bfy|^2\big)$
induced by a smooth function $\tilde{\chi}\co[0,\infty)\ra[0,1/4]$
that is equal to $0$ on $[0,1]$, strictly increasing on $(1,3/2)$,
and equal to $1/4$ on $[3/2,\infty)$.
With respect to the Hamiltonian function $H=-\chi y_1$
we define the Liouville vector field $Y=Y_0+X_H$,
where
\[
X_H=
2\tilde{\chi}'y_1
\big(
-\bfp\!\;\partial_{\bfq}+\bfy\partial_{\bfx}-\bfx\partial_{\bfy}
\big)
+\chi\partial_{x_1}
\]
denotes the Hamiltonian vector field of $H$.
The Liouville vector field $Y$ equals $Y_0$
on $\big\{|\bfp|^2+|\bfx|^2+|\bfy|^2\leq1\big\}$
and $Y_1$ on
$\big\{|\bfp|^2+|\bfx|^2+|\bfy|^2\geq3/2\big\}$;
each flow line of $Y$ connects $M$ with
$\big\{|\bfp|^2+|\bfx|^2+|\bfy|^2=3/2\big\}$
as
\[
\rmd\big(|\bfp|^2+|\bfx|^2+|\bfy|^2\big)(Y)
=|\bfp|^2+\frac12\big(|\bfx|^2+|\bfy|^2\big)+\chi x_1
\]
defining a bijection between the two hypersurfaces.
Denote by $M_0$ the set of intersection points
of $M$ with those flow lines of $Y$ that intersect
$\big\{|\bfp|^2+|\bfx|^2+|\bfy|^2=3/2\big\}$
along $\big\{x_1>-1/2\big\}$.
Observe that $M_0$ is an open neighbourhood of
$M\cap\{x_1=0\}$ in $M$
as
\[\rmd x_1(Y)=\frac12x_1+2\tilde{\chi}'y_1^2+\chi\]
is positive on $\big\{1<|\bfp|^2+|\bfx|^2+|\bfy|^2\big\}$
along $\{x_1=0\}$.
With the considerations from Section
\ref{subsec:amodleinvolution}
we obtain a strict contactomorphism
$\varphi\co(M_0,\rme^f\!\alpha_0)\ra(M_1,\alpha_1)$
defined in a neighbourhood of the invariant set $M\cap\{x_1=0\}$
of the involution $\iota$.


\subsection{Local inversion\label{subsec:locinv}}

Continuing the discussion from Section
\ref{subsec:intliouvec} we observe
that $\varphi\circ\iota\circ\varphi^{-1}$ defines
a strict contactomorphism of $\alpha_1$ on
$\varphi\big(M_0\cap\iota(M_0)\big)$
that leaves $\varphi\big(M_0\cap\{x_1=0\}\big)$
invariant changing (co-)orientations.
We call such a positive (not necessarily strict)
contactomorphism a {\bf contact inversion}
along $\varphi\big(M_0\cap\{x_1=0\}\big)$.

Observe that $\varphi\big(M_0\cap\{x_1=0\}\big)$
as boundary of $\varphi\big(M_0\cap\{x_1\geq0\}\big)$
can be brought into any neighbourhood
of the zero section
$\varphi\big(M_0\cap T_+\big)$ of
$T^*T^d\oplus\underline{\R}^{2n+1-2d}$
using the contact vector field
\[
X=t\partial_t+\bfp\!\;\partial_{\bfp}
+\frac12\big(\bfx\partial_{\bfx}+\bfy\partial_{\bfy}\big)
\]
on $\big(\R\times T^*T^d\times\C^{n-d},\alpha_1\big)$.
Conjugating the contact inversion
$\varphi\circ\iota\circ\varphi^{-1}$
with the flow of $X$ we obtain combined with 
the isotropic neighbourhood theorem:

\begin{prop}
\label{prop:contact-inversion}
 Any isotropic submanifold
 of a given contact manifold,
 whose conformally symplectic normal bundle
 possesses a non-vanishing section,
 admits a tubular neighbourhood $U$ together with a
 contact inversion along the boundary $\partial U$.
\end{prop}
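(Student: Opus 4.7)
The plan is to reduce to the local normal form supplied by the isotropic neighbourhood theorem and then transport the model construction of Sections \ref{subsec:amodleinvolution}--\ref{subsec:locinv}. Let $Q$ denote the given closed isotropic submanifold, of dimension $d$ in a contact manifold of dimension $2n+1$. The non-vanishing section of $\mathrm{CSN}(Q)$ together with the conformally symplectic structure splits off a trivial complex line, yielding an isomorphism of conformally symplectic bundles
\[
\mathrm{CSN}(Q)\cong\underline{\C}\oplus E
\]
with $E$ of real rank $2(n-1-d)$. By the isotropic neighbourhood theorem \cite[Theorem 2.5.8]{gei08}, a tubular neighbourhood of $Q$ is strict-contactomorphic to a neighbourhood of the zero section of the contactisation of $T^*Q\oplus\underline{\C}\oplus E$; the distinguished $\underline{\C}$-summand carries global coordinates $(x_1,y_1)$ that play the role of the first $\C$-factor in Section \ref{subsec:amodleinvolution}.

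Next I would carry over the model construction. The involution $\iota\co(x_1,y_1)\mapsto(-x_1,-y_1)$, extended by the identity on $T^*Q$ and on $E$, preserves the Liouville form of the local model because $\tfrac12(x_1\rmd y_1-y_1\rmd x_1)$ is reflection invariant; it therefore induces a strict contactomorphism on any hypersurface transverse to the Liouville vector field. Forming the interpolating Liouville vector field $Y=Y_0+X_H$ of Section \ref{subsec:intliouvec} with $H=-\chi y_1$---the cut-off $\chi$ now expressed via any fibrewise Hermitian metric on $T^*Q\oplus\underline{\C}\oplus E$---yields the strict contactomorphism $\varphi\co(M_0,\rme^f\alpha_0)\to(M_1,\alpha_1)$ between the round and the flat hypersurface. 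Then $\varphi\circ\iota\circ\varphi^{-1}$ is a contact inversion along the equator $\varphi(M_0\cap\{x_1=0\})$, exactly as in Section \ref{subsec:locinv}. Conjugating this by the flow of the contact vector field $X=t\partial_t+\bfp\,\partial_{\bfp}+\tfrac12(\bfx\partial_{\bfx}+\bfy\partial_{\bfy})$ shrinks the fixed hypersurface into an arbitrarily thin collar of the zero section, and pulling back along the isotropic normal form produces the advertised tubular neighbourhood $U$ of $Q$ together with a contact inversion along $\partial U$.

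The main obstacle I anticipate is verifying that the explicit computations of Sections \ref{subsec:amodleinvolution}--\ref{subsec:locinv}, phrased there for $Q=T^d$ and a trivialised complement $\C^{n-d}$, carry over to general $Q$ and to a possibly non-trivial symplectic bundle $E$. This is essentially a book-keeping matter: the Liouville structure on $T^*Q$ is canonical for every smooth manifold, and the involution $\iota$ and the Hamiltonian $H=-\chi y_1$ depend only on the singled-out $(x_1,y_1)$-plane and on radial invariants of the fibres. The rotational summand of the induced Hamiltonian vector field is the Hamiltonian flow of a squared fibre norm and hence intrinsic to the symplectic bundle $E$ once a compatible almost complex structure is chosen, which is always possible over the compact base $Q$. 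With these observations every formula of Sections \ref{subsec:intliouvec}--\ref{subsec:locinv} applies verbatim, and the proposition follows.
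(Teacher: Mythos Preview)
Your proposal is correct and follows essentially the same route as the paper: the paper's proof consists of the explicit model construction in Sections \ref{subsec:amodleinvolution}--\ref{subsec:locinv}, the conjugation by the flow of $X$, and then the single sentence ``Indeed, this is because the submanifold being a subcritically isotropic torus with trivial conformally symplectic normal bundle is not (really) used in the above construction'' together with an appeal to the isotropic neighbourhood theorem. You have simply spelled out in more detail why that sentence is true---namely that $\iota$ and $H=-\chi y_1$ depend only on the split-off $(x_1,y_1)$-line and on fibrewise radial data, both of which make sense for arbitrary $Q$ and symplectic complement $E$.
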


Indeed, this is because
the submanifold being a subcritically isotropic torus
with trivial conformally symplectic normal bundle
is not (really) used in the above construction.


\begin{ack}
  This work could not ever been finished without the
  immense support of
  Peter Albers and Hansj\"org Geiges.
  We would like to thank Joel Fish
  for drawing our attention to {\it symplectic dynamics}
  and Marc Kegel for showing us the beauty of knot complements.
\end{ack}


\end{document}